\newtheorem{theorem}{Theorem}[section]
\newtheorem{remark}[theorem]{Remark}
\newtheorem{lemma}[theorem]{Lemma}
\newtheorem{assumption}[theorem]{Assumption}
\newtheorem{myalg}{Algorithm}
 \newcommand{\sign}{\text{sign}}
\def \R{\mathbb{R}}
\def \N{\mathbb{N}}
\def \eps{\varepsilon}
\def \Uad{{U_{\text{ad}}}}
\numberwithin{equation}{section}
\begin{document}

\title{A priori stopping rule for an iterative Bregman method for optimal control problems\footnote{This work was founded by German Research Foundation DFG under project grant Wa 3626/1-1.}}

\author{Frank P\"orner\footnote{Department of Mathematics,  University of W\"urzburg, Emil-Fischer-Str. 40, 97074 W\"urzburg, Germany, E-mail: frank.poerner@mathematik.uni-wuerzburg.de}
}
\date{}
\maketitle

\begin{abstract}
In this article we continue our investigation of the iterative regularization method for optimization problems based on Bregman distances. The optimization problems are subject to pointwise inequality constraints in $L^2(\Omega)$. We provide an estimate for the noise error for perturbed data, which can be used to construct an a priori stopping rule. Furthermore we show how to implement our method with a semi-smooth Newton method using finite elements and present numerical results for the stopping rule.

\bigskip
\textbf{AMS Subject Classification:} 49M15, 49N45, 65K10

\bigskip
\textbf{Keywords: source condition, Bregman distance, noise error estimates, stopping rule}
\end{abstract}

\section{Introduction}
We consider an optimization problem of the following form:
\begin{equation}\label{eq:main_problem}\tag{$\textbf{P}$}
 \begin{split}
    \text{Minimize} &\quad \frac{1}{2}\|Su - z\|_Y^2  \\
    \text{such that} &\quad u_a \leq u \leq u_b \quad \text{a.e. in } \Omega,
\end{split}
\end{equation}
which can be interpreted both as an optimal control problem or as an inverse problem. Here $\Omega \subseteq \R^n$, $n \geq 1$ is a bounded, measurable set, $Y$ a Hilbert space and $z \in Y$ a given function. The operator $S: L^2(\Omega) \to Y$ is supposed to be linear and continuous. The inequality constraints are prescribed on the set $\Omega$. We assume $u_a, u_b \in L^\infty(\Omega)$. A common example of such an operator $S$ is the solution operator of a linear partial differential equation. In many situations the operator $S$ is compact or has non-closed range, which makes \eqref{eq:main_problem} ill-posed.\\
We want to see \eqref{eq:main_problem} as an optimal control problem. The unknown $u$ is the control and the constraints are limitations arising from the underlying physical problem. The given function $z$ is the desired state, and we search for a control $u$ satisfying the constraints, such that $Su$ is as close to $z$ as possible. In many situations $z$ cannot be reached due to the presence of the control constraints.\\
Though solutions of \eqref{eq:main_problem} exist, uniqueness of solutions can only be proven under additional assumptions, e.g. injectivity of $S$. Furthermore solutions may be unstable with respect to perturbations, which is critical if only approximations $z^\delta \approx z$ of the exact data $z$ exist.\\
In order to overcome these difficulties, several regularization methods were developed. The most common is the Tikhonov regularization with some positive regularization parameter $\alpha > 0$. The regularized problem is given by:
\begin{equation*}
 \begin{split}
    \text{Minimize} &\quad \frac{1}{2}\|Su - z^\delta\|_Y^2 + \frac{\alpha}{2}\|u\|^2 \\
    \text{such that} &\quad u_a \leq u \leq u_b \quad \text{a.e. in } \Omega,
\end{split}
\end{equation*}
where $z^\delta$ with $\|z-z^\delta\| \leq \delta$ is the perturbed state to the noise level $\delta \geq 0$. Here one is interested in the convergence of the solution for $(\alpha, \delta) \to 0$ under some suitable conditions. For this problem convergence results were developed in \cite{wachsmuth2011}. In the context of inverse problems, we refer to \cite{engl1996}. However, for $\alpha$ tending to zero, the Tikhonov regularized problem becomes increasingly ill-conditioned.\\

In order to overcome this difficulty, we started in \cite{wachsmuth2016} to investigate the Bregman iterative regularization technique. There, the iterate $u_{k+1}$ is given by the solution of
\begin{equation*}
\text{Minimize} \quad \frac{1}{2}\|Su-z\|_Y^2 + \alpha_{k+1}  D^{\lambda_{k}}(u,u_{k}),
\end{equation*}
where $D^\lambda(u,v) := J(u) - J(v) - (u-v,\lambda)$ is called the (generalized) Bregman distance associated with a regularization function $J$ with subgradient $\lambda \in \partial J(v)$.
This iteration method was first used first in \cite{burger2007,osher2005}, it was
applied to an image restoration problem with $J$ being the total variation.
We choose to incorporate the control constraint into the regularization functional, resulting in
\[
J(u) := \frac{1}{2}\|u\|^2 + I_\Uad(u),
\]
where $\Uad = \{u \in L^2(\Omega): \; u_a \leq u \leq u_b\}$, and $I$ is the indicator function of convex analysis.
While at first sight the incorporation of $I_\Uad$ into the Bregman regularization functional together with the explicit control constraint $u\in U_{ad}$ seems to be redundant, this choice allows to prove strong convergence under a suitable regularity assumption, which allows bang-bang structure and non-attainability (see \cite{wachsmuth2016}). We recall the most important results in section \ref{sec:bregman}, including our regularity assumption.

The convergence and regularization error estimates are formulated assuming that the value $z$ is known exactly. If only approximations $z^\delta \approx z$ are available, the next iterate $u_{k+1}^\delta$ is given by the solution of
\begin{equation*}
\text{Minimize} \quad \frac{1}{2}\|Su-z^\delta\|_Y^2 + \alpha_{k+1}  D^{\lambda_{k}^\delta}(u,u_{k}^\delta).
\end{equation*}
Again we assume that the noise level $\delta$ is known and $z^\delta$ satisfies $\|z-z^\delta\| \leq \delta$. In general we cannot expect convergence of the sequence $(u_k^\delta)_k$. Our aim is to identify an optimal parameter $k(\delta)$, at which it is reasonable to stop the iteration.\\

In section \ref{sec:noise_error} we derive an estimate for the noise error $\|u_k-u_k^\delta\|$ which is used to construct an a priori stopping rule. Furthermore convergence of $u_{k(\delta)}^\delta$ is investigated as $\delta \to 0$.  In Section \ref{sec:numerics} we show how to use and how to implement a semi-smooth Newton solver into our iterative method using finite elements. Finally numerical results will be presented in section \ref{sec:numerical_results}.

\paragraph*{Notation.}
For elements $q \in L^2(\Omega)$, we denote the $L^2$-Norm by $\|q\| := \|q\|_{L^2(\Omega)}$. Furthermore $c$ is a generic constant, which may change from line to line, but is independent from the important variables, e.g. $k$.

\section{Bregman iteration}
\label{sec:bregman}

In order to prove convergence and convergence rates of our numerical method we need to assume some regularity of the solution. A common assumption on a solution $u^\dagger$ is the following source condition, which is an abstract smoothness condition, see, e.g.,  \cite{burger2007,chaventkunisch94,itojin11,neubauer1988,wachsmuth2011,wachsmuth2011b}. We say $u^\dagger$ satisfies the source condition \ref{ass:SC} if the following assumption holds.

{
\renewcommand{\thetheorem}{\textbf{SC}}
\begin{assumption}[Source Condition]\label{ass:SC}
Let $u^\dagger$ be a solution of \eqref{eq:main_problem}.
Assume that there exists an element $w \in Y$ such that $u^\dagger = P_\Uad(S^\ast w)$ holds.
\end{assumption}
}

The source condition is equivalent to the existence of Lagrange multipliers for the problem
\begin{equation}\label{eq:mini_norm}\begin{split}
\min\limits_{u \in \Uad} \quad & \frac{1}{2} \|u\|^2\\
\text{such that} \quad & Su = y^\dagger,
\end{split}
\end{equation}
where $y^\dagger$ is the uniquely defined optimal state of \eqref{eq:main_problem}. Note that the existence of Lagrange multipliers is not guaranteed in general, as in may situations the operator $S$ is compact or has non-closed range.\\
If $z$ is not attainable, i.e. $y^\dagger \neq z$, a solution $u^\dagger$ may be bang-bang, i.e., $u^\dagger$ is a linear combination of characteristic functions, hence discontinuous in general with $u^\dagger \not \in H^1(\Omega)$. But in many examples the range of $S^\ast$ contains $H^1(\Omega)$ or $C(\bar \Omega)$. Hence the source condition \ref{ass:SC} is too restrictive for this case. We will resort to the following condition. We say $u^\dagger$ satisfies the source condition \ref{ass:ActiveSet} if the following assumption holds. Recall that the adjoint state is defined by $p^\dagger = S^\ast(z-Su^\dagger)$.

{
\renewcommand{\thetheorem}{\textbf{ASC}}
\begin{assumption}[Active Set Condition]\label{ass:ActiveSet}
Let $u^\dagger$ be a solution of \eqref{eq:main_problem} and assume that there exists a set $I \subseteq \Omega$, a function $w \in Y$, and positive constants $\kappa, c$ such that the following holds

\begin{enumerate}
  \item (source condition) $I \supset \{ x \in \Omega: \; p^\dagger(x) = 0 \}$  and
  $$\chi_I u^\dagger = \chi_I P_\Uad (S^\ast w),$$
  \item (structure of active set) $A := \Omega \setminus I$ and for all $\eps > 0$
  $$|\{ x\in A: \; 0 < |p^\dagger(x)| < \eps  \}| \leq c \eps^\kappa,$$
  \item (regularity of solution) $S^\ast w \in L^\infty(\Omega)$.
\end{enumerate}
\end{assumption}
}

Assumption \ref{ass:ActiveSet} is a generalization of assumption \ref{ass:SC}, since for $I = \Omega$ both assumptions coincide. We decided to differentiate between them, because assumption \ref{ass:SC} omits more regularity, allowing us to establish improved results. This source condition is used in e.g. \cite{wachsmuth2011,wachsmuth2011b,wachsmuth2013,wachsmuth2016}.

In \cite{wachsmuth2016} we applied the Bregman iteration with the regularization functional
$$J: L^2(\Omega) \to  \R \cup \{-\infty, +\infty\}, \quad J(u) := \frac{1}{2}\|u\|^2 + I_\Uad(u)$$
where $I_C$ denotes the indicator function of the set $C$. The Bregman distance for $J$ at $u,v \in L^2(\Omega)$ and $\lambda \in \partial J(v)$ is defined as
$$D^\lambda(u,v) := J(u) - J(v) - (u-v, \lambda).$$
Here $\partial J(v)$ denotes the subdifferential of $J$ at $v$. The functional $J$ is convex and nonnegative, the Bregman distance is also nonnegative and convex with respect to $u$. Our method is now given by:

\begin{myalg}\label{alg:MinEx}
Let $u_0 = P_\Uad(0)  \in \Uad$, $\mu_0 = 0$, $\lambda_0 = 0  \in \partial J(u_0)$ and $k=1$.
\begin{enumerate}
  \item Solve for $u_k$: \label{a_start}
  \begin{equation}
\label{Min1xx}\tag{Min}
\text{Minimize} \quad \frac{1}{2}\|Su-z\|_Y^2 + \alpha_{k}  D^{\lambda_{k-1}}(u,u_{k-1}).
\end{equation}
  \item Set $\mu_k := \sum\limits_{i=1}^k \frac{1}{\alpha_i} (z-Su_i)$ and $\lambda_k := S^\ast \mu_k$.
  \item Set $k:=k+1$, go back to \ref{a_start}.
\end{enumerate}
\end{myalg}

Here $(\alpha_k)_k$ is a non-negative, uniformly bounded sequence of real numbers. Algorithm \ref{alg:MinEx} is well-posed, see \cite{wachsmuth2016}. We define the abbreviation
$$\gamma_k := \sum\limits_{j=1}^k \frac{1}{\alpha_j}.$$
The following theorem provides some regularization error estimates for the control $u_k$ under some suitable regularity assumptions. For the proof and for some general convergence results of algorithm \ref{alg:MinEx} we refer to \cite{wachsmuth2016}.

\begin{theorem}\label{thm:SC_strong_conv}
Let $(u_k)_k$ be the sequence generated by algorithm \ref{alg:MinEx}. Assume that Assumption \ref{ass:SC} holds for $u^\dagger$. Then
\begin{align*}
\|u^\dagger - u_k\|^2 &= \mathcal{O}(\gamma_k^{-1}) \quad \text{and} \quad \sum\limits_{i=1}^k \frac{1}{\alpha_i} \|u^\dagger - u_i\|^2 \leq c.
\end{align*}
If we assume that instead Assumption \ref{ass:ActiveSet} holds, then
\begin{align*}
\|u^\dagger - u_k\|^2 &= \mathcal{O}\left( \gamma_k^{-1} + \gamma_k^{-1} \sum\limits_{j=1}^k \alpha_j^{-1} \gamma_j^{- \kappa}   \right)\\
\text{and} \quad \sum\limits_{i=1}^k \frac{1}{\alpha_i} \|u^\dagger - u_i\|^2 &\leq c \left( 1 + \sum\limits_{i=1}^k \alpha_i^{-1} \gamma_i^{-\kappa} \right).
\end{align*}
\end{theorem}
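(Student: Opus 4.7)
My strategy is to combine the optimality of $u_k$ in \eqref{Min1xx} with the three-point Bregman identity
\[
D^{\lambda_{k-1}}(u^\dagger,u_{k-1}) - D^{\lambda_k}(u^\dagger,u_k) = D^{\lambda_{k-1}}(u_k,u_{k-1}) + (u^\dagger-u_k,\lambda_k-\lambda_{k-1}),
\]
which is a direct consequence of the definition of $D^\lambda$. Testing the subproblem at $u=u^\dagger$ yields
\[
\tfrac12\|Su_k-z\|_Y^2 + \alpha_k D^{\lambda_{k-1}}(u_k,u_{k-1}) \le \tfrac12\|Su^\dagger-z\|_Y^2 + \alpha_k D^{\lambda_{k-1}}(u^\dagger,u_{k-1}),
\]
and the optimality update $\lambda_k-\lambda_{k-1}=\alpha_k^{-1}S^\ast(z-Su_k)$ lets me rewrite the cross term as $\alpha_k^{-1}(Su^\dagger-Su_k,z-Su_k)$. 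Summing from $k=1$ to $K$, using $\lambda_0=0$, $\mu_0=0$ and the non-negativity of $D^{\lambda_K}(u^\dagger,u_K)$, produces a baseline telescope that already controls $\sum_{k=1}^K \alpha_k^{-1}\|Su_k-Su^\dagger\|^2$ and $\sum_{k=1}^K D^{\lambda_{k-1}}(u_k,u_{k-1})$ by $D^{\lambda_0}(u^\dagger,u_0)=\tfrac12\|u^\dagger\|^2$.

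Under \ref{ass:SC}, the identity $u^\dagger=P_\Uad(S^\ast w)$ translates into $\lambda^\dagger:=S^\ast w\in\partial J(u^\dagger)$, so a second subgradient is available. The structural fact $D^\lambda(u,v)\ge\tfrac12\|u-v\|^2$ for $u,v\in\Uad$ and $\lambda\in\partial J(v)$ (which follows from $J=\tfrac12\|\cdot\|^2$ on $\Uad$ and $\lambda-v\in N_\Uad(v)$), together with the symmetric identity
\[
D^{\lambda_k}(u^\dagger,u_k)+D^{\lambda^\dagger}(u_k,u^\dagger)=(u_k-u^\dagger,\lambda_k-\lambda^\dagger)=(Su_k-Su^\dagger,\mu_k-w),
\]
delivers the key control $\|u_k-u^\dagger\|^2\le(Su_k-Su^\dagger,\mu_k-w)$. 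Plugging this into the energy estimate, using $\mu_k-\mu_{k-1}=\alpha_k^{-1}(z-Su_k)$ to telescope $\mu_k$ against $w$ (with $\mu_0=0$ contributing $\tfrac12\|w\|^2$), and applying Young's inequality to separate residuals from the $w$-term, yields $\sum_{i=1}^K\alpha_i^{-1}\|u^\dagger-u_i\|^2\le c$. A Lyapunov argument based on a suitable combination of $\|\mu_k-w\|^2$ and $D^{\lambda_k}(u^\dagger,u_k)$ then promotes the summation bound to the pointwise rate $\|u^\dagger-u_k\|^2=\mathcal{O}(\gamma_k^{-1})$.

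Under \ref{ass:ActiveSet} the source condition only holds on the inactive set $I$, so the previous argument controls $\|\chi_I(u^\dagger-u_k)\|^2$ at the advertised rate. On the active set $A$ the variational inequality $(p^\dagger,u_k-u^\dagger)\le 0$ together with $|p^\dagger|>0$ on $A$ forces the bang-bang identity $|p^\dagger|\,|u^\dagger-u_k|=p^\dagger(u^\dagger-u_k)$ a.e.\ on $A$. Splitting $A=\{|p^\dagger|\ge\eps\}\cup\{0<|p^\dagger|<\eps\}$, the dense-level piece is estimated by $\eps^{-1}\int_A p^\dagger(u^\dagger-u_k)\,dx=\eps^{-1}(z-Su^\dagger,S\chi_A(u^\dagger-u_k))_Y$, which reduces to the residual bounds of the first step, while the thin-level piece is controlled via the measure estimate $|\{0<|p^\dagger|<\eps\}|\le c\eps^\kappa$ together with $u_a,u_b\in L^\infty$. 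Balancing $\eps$ as a suitable power of the residual at iteration $j$ produces the extra term of order $\gamma_j^{-\kappa}$ that aggregates to $\sum_j\alpha_j^{-1}\gamma_j^{-\kappa}$. The main obstacle is precisely the calibration of $\eps$ in this step: the splitting on $A$ must be synchronised with the telescoping on $I$ so that the choice at each iteration is compatible with the residuals generated by the iteration itself and the extra term assembles inductively to the stated form; everything else is a routine combination of the three-point identity, the optimality of the subproblem, and the variational inequality for $u^\dagger$.
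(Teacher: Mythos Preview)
The paper does not give a proof of this theorem; it simply refers to \cite{wachsmuth2016}. Your sketch follows precisely the strategy of that reference: the three-point Bregman identity together with the optimality of $u_k$ in the subproblem gives a telescoping energy inequality; under \ref{ass:SC} the identification $\lambda^\dagger:=S^\ast w\in\partial J(u^\dagger)$, the $1$-strong convexity $D^\lambda(u,v)\ge\tfrac12\|u-v\|^2$, and the symmetric Bregman identity yield $\|u_k-u^\dagger\|^2\le(Su_k-Su^\dagger,\mu_k-w)_Y$, after which telescoping in $\|\mu_k-w\|_Y^2$ gives both the summation bound and the pointwise rate; under \ref{ass:ActiveSet} one splits $\Omega=I\cup A$, uses the source condition on $I$, and on $A$ exploits the pointwise sign relation $p^\dagger(u^\dagger-u_k)=|p^\dagger|\,|u^\dagger-u_k|$ together with the measure bound on $\{0<|p^\dagger|<\eps\}$ and an optimisation in $\eps$.

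Two small corrections. First, $D^{\lambda_0}(u^\dagger,u_0)=\tfrac12\|u^\dagger\|^2-\tfrac12\|u_0\|^2$, not $\tfrac12\|u^\dagger\|^2$; this is harmless since it is still a fixed constant. Second, and more substantively, the control on $A$ does not go through the term $(z-Su^\dagger,S\chi_A(u^\dagger-u_k))_Y$ as you write: this quantity is not directly dominated by the residuals $\|Su^\dagger-Su_k\|_Y$ that the iteration controls. What actually happens in \cite{wachsmuth2016} is that the full nonnegative term $(p^\dagger,u^\dagger-u_k)$ already sits on the \emph{left} of the energy inequality (it comes from testing the first-order conditions of both $u_k$ and $u^\dagger$), and one extracts from it the lower bound $\eps\int_{A\cap\{|p^\dagger|\ge\eps\}}|u^\dagger-u_k|$, which, combined with $u_a,u_b\in L^\infty$, controls $\|\chi_A(u^\dagger-u_k)\|^2$ up to the thin-level contribution $c\eps^\kappa$. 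With this routing fixed, your outline matches the argument in the cited paper.
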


Note that by the uniform boundedness of the sequence $(\alpha_k)_k$ and by \cite[Lemma 3.5]{wachsmuth2016} we obtain
$$\lim\limits_{k \to \infty} \gamma_k^{-1} = 0 \quad \text{and} \quad \lim\limits_{k\to \infty} \gamma_k^{-1} \sum\limits_{j=1}^k \alpha_j^{-1} \gamma_j^{- \kappa} = 0.$$

\section{Noise error estimate and stopping rule}
\label{sec:noise_error}

Assume that we do not know the exact data $z$, but rather a disturbed approximation $z^\delta$, which satisfies
$$\|z-z^\delta\|_Y \leq \delta.$$
The number $\delta \geq 0$ can be considered as an estimate for the noise level. Let $u^\dagger$ be a solution of \eqref{eq:main_problem} with the exact data $z$. We cannot expect $u_k^\delta \to u^\dagger$ if $\delta > 0$, even if some regularity assumption holds for $u^\dagger$. Here $(u_k^\delta)_k$ denotes the sequence generated by algorithm \ref{alg:MinEx} for $z^\delta$.

As pointed out in \cite{frick2010, frick2011, frick2012}
the Bregman iteration algorithm \ref{alg:MinEx} can be interpreted as an augmented Lagrange method
applied to the minimum norm problem:
$$\min\limits_{u \in \Uad} \|u\| \quad \text{subject to } Su=z.$$
Furthermore we want to point out, that the authors in \cite{frick2010} derived a stopping rule $k_M(\delta)$ based on Morozov's principle. One major assumption to enforce convergence of the iterates read (see \cite[Theorem 5.3]{frick2010}):
$$\lim\limits_{\delta \to 0} \delta^2 \sum\limits_{j=1}^{ k_M(\delta)} \frac{1}{\alpha_j} = 0 \quad \text{and} \quad \lim\limits_{\delta \to 0}\sum\limits_{j=1}^{k_M(\delta)} \frac{1}{\alpha_j} = \infty.$$
Then each weak cluster point of the sequence $(u_{k_M(\delta)}^\delta)_{\delta \to 0}$ is a solution of the original problem. The proof relies heavily on the attainability of $z$, and the source condition \eqref{ass:SC}. We cannot use this result due to non-attainability and the more general regularity assumption \eqref{ass:ActiveSet}.

\subsection{Noise estimate}
We establish the following noise estimate, which will be used later to construct the stopping rule.

\begin{lemma}\label{lem:noise_error}
Let $(u_k)_k$ and $(u_k^\delta)_k$ denote the sequences generated by Algorithm \ref{alg:MinEx} for data $z$ and $z^\delta$, respectively. Then it holds
$$\sum\limits_{i=1}^k \frac{1}{\alpha_i} \|u_i^\delta - u_i\|^2 \leq \delta^2 \sum\limits_{i=1}^k \left( \frac{1}{\alpha_i^2} + \gamma_{i-1}^2 \right).$$
\end{lemma}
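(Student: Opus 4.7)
The plan is to compare the two optimality systems at step $k$, derive a sharp energy inequality that telescopes after dividing by $\alpha_k^2$, and then control a residual noise--multiplier cross term. The minimized functional in \eqref{Min1xx} is $\alpha_k$-strongly convex (because $J(u) = \frac{1}{2}\|u\|^2 + I_\Uad(u)$ is $1$-strongly convex, so $\alpha_k D^{\lambda_{k-1}}(\cdot, u_{k-1}) \geq \frac{\alpha_k}{2}\|\cdot - u_{k-1}\|^2$), with an additional $\frac{1}{2}\|S(\cdot)\|^2$ Bregman contribution from the quadratic tracking term. Writing the quantitative minimizer inequality at $u_k$ tested against $u_k^\delta$ and the symmetric one for $u_k^\delta$, adding, and using $\lambda_k = S^\ast\mu_k$ to define $M_k := \mu_k - \mu_k^\delta$, one obtains
\[
\alpha_k\|e_k\|^2 + \|S e_k\|^2 + (S e_k, \eta) + \alpha_k(S e_k, M_{k-1}) \leq 0,
\]
with $e_k := u_k^\delta - u_k$ and $\eta := z - z^\delta$. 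The $\|S e_k\|^2$ term on the LHS is essential for the cancellations below.

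Subtracting the $\mu$-updates of Algorithm \ref{alg:MinEx} gives the recursion $\alpha_k(M_k - M_{k-1}) = \eta + S e_k$, hence $M_{i-1} = \gamma_{i-1}\eta + \sum_{j<i} S e_j/\alpha_j$. I would use this to rewrite $\alpha_k(Se_k, M_{k-1}) = \alpha_k^2(M_k - M_{k-1}, M_{k-1}) - \alpha_k(\eta, M_{k-1})$, then apply $2(M_k - M_{k-1}, M_{k-1}) = \|M_k\|^2 - \|M_{k-1}\|^2 - \|M_k - M_{k-1}\|^2$ together with $\alpha_k^2\|M_k - M_{k-1}\|^2 = \|\eta\|^2 + 2(\eta, Se_k) + \|Se_k\|^2$. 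After this substitution the $(Se_k, \eta)$ terms cancel exactly, the $\|Se_k\|^2$ contribution is reduced to $\frac{1}{2}\|Se_k\|^2$ on the LHS, and dividing by $\alpha_k^2$ and summing from $1$ to $K$ (dropping the non-negative remaining $\frac{1}{2}\|M_K\|^2$ and $\frac{1}{2}\sum\|Se_k\|^2/\alpha_k^2$) produces
\[
\sum_{i=1}^K \frac{\|e_i\|^2}{\alpha_i} \leq \frac{\delta^2}{2}\sum_{i=1}^K \frac{1}{\alpha_i^2} + \sum_{i=1}^K \frac{(\eta, M_{i-1})}{\alpha_i}.
\]

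The last cross-sum is the main obstacle, because $M_{i-1}$ couples all past noise errors. I would split $(\eta, M_{i-1}) = \gamma_{i-1}\|\eta\|^2 + \sum_{j<i}(\eta, Se_j)/\alpha_j$. The first piece, summed against $1/\alpha_i$, is controlled by the arithmetic identity $\gamma_i^2 - \gamma_{i-1}^2 \geq 2\gamma_{i-1}/\alpha_i$, which telescopes to $\frac{1}{2}\gamma_K^2\delta^2$; the inequality $\gamma_K^2 \leq 2\gamma_{K-1}^2 + 2/\alpha_K^2 \leq 2\sum(\gamma_{i-1}^2 + 1/\alpha_i^2)$ then fits it inside the claimed bound. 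For the second piece I would interchange the order of summation and apply Cauchy--Schwarz to $\sum_j(\gamma_K - \gamma_j)(\eta, Se_j)/\alpha_j$, producing a term proportional to $\delta\sqrt{\sum\|e_i\|^2/\alpha_i}$ times a factor involving $\gamma_K$; Young's inequality then absorbs this back into the LHS. Any residual $\|S\|$-dependence is collected into the generic constant of the paper's notation convention. The technical heart of the proof is this last absorption step: without the sharp $\|Se_k\|^2$ term in the per-step inequality, a naive Cauchy--Schwarz would lose a factor of $\|S\|^2$ at each iteration and the resulting Gronwall-type recursion would blow up instead of collapsing into the asserted bound.
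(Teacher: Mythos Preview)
Your per-step inequality
\[
\alpha_k\|e_k\|^2 + \|Se_k\|^2 + (Se_k,\eta) + \alpha_k(Se_k,M_{k-1}) \le 0
\]
is correct, and telescoping via the polarization identity is the right idea. The gap is in the choice of telescope variable and the treatment of the residual cross term.

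You telescope on $M_k = \mu_k - \mu_k^\delta$, which mixes the noise $\eta$ and the state errors $Se_j$. This produces the leftover sum $\sum_i (\eta,M_{i-1})/\alpha_i$. Your plan to bound its ``second piece'' $\sum_j (\gamma_K-\gamma_j)(\eta,Se_j)/\alpha_j$ by Cauchy--Schwarz and Young cannot close with constant~$1$: if you pair it against $\sum \|e_j\|^2/\alpha_j$ you pick up a factor $\|S\|$ and a term of order $\delta^2\gamma_K^3$, which in general is \emph{not} controlled by $\delta^2\sum(\alpha_i^{-2}+\gamma_{i-1}^2)$ (take $\alpha_i\equiv\alpha$ small); if you pair it against the reserve $\tfrac12\sum\|Se_j\|^2/\alpha_j^2$ you obtain $\tfrac{\delta^2}{2}\sum_j(\gamma_K-\gamma_j)^2$, which again is not dominated by $\sum_i\gamma_{i-1}^2$ for arbitrary $(\alpha_i)$. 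Either way you end up with a multiplicative constant (and possibly an $\|S\|$-dependence), whereas the lemma asserts the estimate with constant exactly~$1$ and no such dependence---there is no ``generic constant'' to hide behind here.

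The paper avoids this cross term altogether by splitting $\lambda_{k-1}-\lambda_{k-1}^\delta$ into its noise part $\gamma_{k-1}S^\ast\eta$ and its state-error part \emph{before} telescoping, and then applying the polarization identity only to $v_k := \sum_{i\le k}(y_i-y_i^\delta)/\alpha_i$. The noise contribution then appears solely as the scalar terms $\delta\|Se_k\|$ and $\alpha_k\delta\gamma_{k-1}\|Se_k\|$, to which Young's inequality is applied at each step with constants chosen so that the $\|Se_k\|^2$ contributions cancel exactly ($\tfrac14+\tfrac14+\tfrac12-1=0$). After division by $\alpha_k^2$ and summation this gives the claimed bound with constant~$1$.
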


\begin{proof}
We start by using the first order optimality conditions, both for $u_k$ and $u_k^\delta$ (compare to \cite{wachsmuth2016})
\begin{align*}
(-p_{k+1}^\delta + \alpha_{k+1}(u_{k+1}^\delta - \lambda_k^\delta) , u_{k+1} - u_{k+1}^\delta) &\geq 0,\\
(-p_{k+1} + \alpha_{k+1}(u_{k+1} - \lambda_k) , u_{k+1}^\delta - u_{k+1}) &\geq 0.
\end{align*}
By adding we obtain
$$\alpha_{k+1}\|u_{k+1} - u_{k+1}^\delta\|^2 \leq (p_{k+1} - p_{k+1}^\delta, u_{k+1} - u_{k+1}^\delta) + \alpha_{k+1}(\lambda_k - \lambda_k^\delta, u_{k+1} - u_{k+1}^\delta).$$
An estimate yields for the first term
\begin{align*}
(p_{k+1} - p_{k+1}^\delta, u_{k+1} &- u_{k+1}^\delta) = (z - Su_{k+1} - (z^\delta - Su_{k+1}^\delta), S(u_{k+1} - u_{k+1}^\delta))\\
&= (z-z^\delta, y_{k+1} - y_{k+1}^\delta) + (y_{k+1}^\delta - y_{k+1}, y_{k+1} - y_{k+1}^\delta)\\
&\leq \delta \|y_{k+1} - y_{k+1}^\delta\|_Y - \|y_{k+1} - y_{k+1}^\delta\|_Y^2,
\end{align*}
while for the second term we estimate
\begin{align*}
(\lambda_k - \lambda_k^\delta, u_{k+1} &- u_{k+1}^\delta) = \sum\limits_{i=1}^k \frac{1}{\alpha_i} (z-z^\delta - Su_i + Su_i^\delta, y_{k+1} - y_{k+1}^\delta)\\
&= \sum\limits_{i=1}^k \frac{1}{\alpha_i}(z-z^\delta, y_{k+1} - y_{k+1}^\delta) + \sum\limits_{i=1}^k \frac{1}{\alpha_i}(y_i^\delta - y_i, y_{k+1} - y_{k+1}^\delta)\\
&\leq \delta \gamma_k \|y_{k+1} - y_{k+1}^\delta\|_Y + \left( \sum\limits_{i=1}^k \frac{1}{\alpha_i} (y_i^\delta - y_i), y_{k+1} - y_{k+1}^\delta   \right).
\end{align*}
By defining the quantity
$$v_k := \sum\limits_{i=1}^k \frac{1}{\alpha_i}(y_i - y_i^\delta),$$
and using the equality
$$(-v_k, v_{k+1} - v_k) = \frac{1}{2}\|v_k\|_Y^2 - \frac{1}{2}\|v_{k+1}\|_Y^2 + \frac{1}{2}\|v_{k+1} - v_k\|_Y^2,$$
we obtain
\begin{align*}
(\lambda_k &- \lambda_k^\delta, u_{k+1} - u_{k+1}^\delta) \leq \delta \gamma_k \|y_{k+1} - y_{k+1}^\delta\|_Y + \alpha_{k+1} (-v_k, v_{k+1} - v_k)\\
&=  \delta \gamma_k \|y_{k+1} - y_{k+1}^\delta\|_Y + \alpha_{k+1} \left( \frac{1}{2}\|v_k\|_Y^2 - \frac{1}{2}\|v_{k+1}\|_Y^2 + \frac{1}{2}\|v_{k+1} - v_k\|_Y^2  \right).
\end{align*}
Putting everything together yields
\begin{align*}
\alpha_{k+1} \|u_{k+1} - u_{k+1}^\delta\|^2 &\leq  \delta \|y_{k+1} - y_{k+1}^\delta\|_Y^2 - \|y_{k+1} - y_{k+1}^\delta\|_Y^2 \\
&\quad+\alpha_{k+1} \delta \gamma_k \|y_{k+1} - y_{k+1}^\delta\|_Y \\
&\quad + \alpha_{k+1}^2 \left( \frac{1}{2}\|v_k\|_Y^2 - \frac{1}{2}\|v_{k+1}\|_Y^2 + \frac{1}{2}\|v_{k+1} - v_k\|_Y^2 \right).
\end{align*}
With
$$\|v_{k+1} - v_k\|_Y^2 = \frac{1}{\alpha_{k+1}^2}\|y_{k+1} - y_{k+1}^\delta\|_Y^2,$$
we obtain
\begin{align*}
\alpha_{k+1} \|u_{k+1} - u_{k+1}^\delta\|^2 &\leq \delta^2 + \frac{1}{4}\|y_{k+1} - y_{k+1}^\delta\|_Y^2 - \|y_{k+1} - y_{k+1}^\delta\|_Y^2\\
&\quad + \alpha_{k+1}^2 \delta^2 \gamma_k^2 + \frac{1}{4}\|y_{k+1} - y_{k+1}^\delta\|_Y^2\\
&\quad + \alpha_{k+1}^2 \left( \frac{1}{2}\|v_k\|_Y^2 -  \frac{1}{2}\|v_{k+1}\|_Y^2\right) + \frac{1}{2}\|y_{k+1} - y_{k+1}^\delta\|_Y^2\\
&= \delta^2 + \alpha_{k+1}^2 \delta^2 \gamma_k^2 +\alpha_{k+1}^2 \left( \frac{1}{2}\|v_k\|_Y^2 -  \frac{1}{2}\|v_{k+1}\|_Y^2\right).
\end{align*}
By dividing everything by $\alpha_{k+1}^2$ and performing a summation over $k$ yield the result
$$\sum\limits_{i=1}^k \frac{1}{\alpha_i} \|u_i^\delta - u_i\|^2 \leq \delta^2 \sum\limits_{i=1}^k \left( \frac{1}{\alpha_i^2} + \gamma_{i-1}^2 \right).$$
\end{proof}

\begin{remark}
The first iteration step is precisely a Tikhonov regularization with regularization parameter $\alpha_1$, so we should recover the same noise estimates. This is the case, since for $k=1$ we obtain
$$\|u_1^\delta - u_1\| \leq \frac{\delta}{\sqrt{\alpha_1}},$$
which is the same estimate obtained for Tikhonov with regularization parameter $\alpha_1$, see \cite[Theorem 3.1]{wachsmuth2011}.
\end{remark}

\begin{remark}
A slight modification of the proof above yields
$$\frac{1}{4}\sum\limits_{i=1}^k \frac{1}{\alpha_i^2} \|y_i - y_i^\delta\|_Y^2 + \sum\limits_{i=1}^k \frac{1}{\alpha_i} \|u_i^\delta - u_i\|^2 \leq 2 \delta^2 \sum\limits_{i=1}^k \left( \frac{1}{\alpha_i^2} + \gamma_{i-1}^2 \right),$$
from which we recover the estimates
\begin{align*}
\|u_1 - u_1^\delta\| &\leq c \frac{\delta}{\sqrt{\alpha_1}},\\
\|y_1 - y_1^\delta\| &\leq c \delta,
\end{align*}
which resembles the estimates obtained for the Tikhonov regularization but with a constant $c \geq 1$, see also \cite[Theorem 3.1]{wachsmuth2011}.
\end{remark}

\subsection{A priori stopping rule}

We will now combine the error estimates with respect to the noise level and regularization. This will give an a priori stopping rule with best possible convergence order. We assume that assumption \ref{ass:ActiveSet} holds for $u^\dagger$. The two estimates are given by (see lemma \ref{lem:noise_error} for the noise error and theorem \ref{thm:SC_strong_conv} for the regularization error):
\begin{align*}
\sum\limits_{i=1}^k \frac{1}{\alpha_i} \|u_i - u_i^\delta\|^2 &\leq \delta^2 \sum\limits_{i=1}^k \left( \frac{1}{\alpha_i^2} + \gamma_{i-1}^2 \right) =: e_k^n,\\
\sum\limits_{i=1}^k \frac{1}{\alpha_i}\|u^\dagger - u_i\|^2 &\leq c \left( 1 +   \sum\limits_{i=1}^k \alpha_i^{-1}\gamma_i^{-\kappa} \right) =: c e_k^r,
\end{align*}
where the sum of quadratic noise error $e_k^n$ and the sum of quadratic regularization error $e_k^r$ is defined by:
\begin{align*}
e_k^n &:= \delta^2 \sum\limits_{i=1}^k \left( \frac{1}{\alpha_i^2} + \gamma_{i-1}^2 \right),\\
e_k^r &:=  1 + \sum\limits_{i=1}^k  \alpha_i^{-1} \gamma_i^{-\kappa}.
\end{align*}
Our stopping rule is now given by: Find maximal $k(\delta)$, such that the noise error $e_i^n$ is below the regularization error $e_i^r$ for all $i \leq k(\delta)$. Hence the optimal parameter $k(\delta)$ is defined by
$$k(\delta) := \begin{cases}
0 & \text{if }e_1^n > \tau e_1^r\\
\max \{ k \in \N: \; e_{i}^n \leq \tau e_{i}^r, \quad \forall i \leq k \} & \text{else.}
\end{cases}$$
Here $\tau > 0$ is a constant. For the case $e_1^n > \tau e_1^r$ we define $k(\delta) = 0$, which reflects the case that the noise error is dominating after the first iteration. This happens only if $\delta$ is too big and we will show that $k(\delta) \neq 0$ for $\delta$ small enough. Note that $k(\delta)$ depends also on $\tau$ and $(\alpha_k)_k$, but we are suppressing the dependence due to clarity of the notation.

\begin{lemma}
Let $\delta > 0$. The value $k(\delta)$ defined above is well-defined and unique. Furthermore $k(\delta) \to \infty$ as $\delta \to 0$.
\end{lemma}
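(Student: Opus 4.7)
The plan is to handle three claims in order: the maximum in the definition of $k(\delta)$ exists (well-definedness), it is unique, and $k(\delta)\to\infty$ as $\delta\to 0$. The case $e_1^n>\tau e_1^r$ is taken care of by the first branch of the definition, so I focus on the complementary case. There I would introduce the first violation index
\[
i_0(\delta):=\min\{i\in\mathbb{N}:e_i^n>\tau e_i^r\},
\]
after which $\{k\in\mathbb{N}:e_i^n\leq \tau e_i^r\ \forall i\leq k\}=\{1,\dots,i_0(\delta)-1\}$, so $k(\delta)=i_0(\delta)-1$ is both well-defined and unique as soon as $i_0(\delta)<\infty$.

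Proving $i_0(\delta)<\infty$---equivalently, $e_k^n/e_k^r\to\infty$ as $k\to\infty$---is the heart of the matter, and I would do it via two complementary growth estimates. First, the AM--GM inequality $2\gamma_{k-1}\alpha_k^{-1}\leq \gamma_{k-1}^2+\alpha_k^{-2}$ gives
\[
\gamma_k^2=(\gamma_{k-1}+\alpha_k^{-1})^2\leq 2\bigl(\gamma_{k-1}^2+\alpha_k^{-2}\bigr),
\]
so, keeping only the $i=k$ summand in the definition of $e_k^n$, one has $e_k^n\geq \delta^2(\alpha_k^{-2}+\gamma_{k-1}^2)\geq \tfrac{\delta^2}{2}\gamma_k^2$. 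Second, the remark just after Theorem \ref{thm:SC_strong_conv} already records that $\gamma_k^{-1}+\gamma_k^{-1}\sum_{j=1}^k\alpha_j^{-1}\gamma_j^{-\kappa}\to 0$, which is exactly $\gamma_k^{-1}e_k^r\to 0$. Combining,
\[
\frac{e_k^n}{e_k^r}\geq \frac{\delta^2}{2}\,\gamma_k\cdot\frac{\gamma_k}{e_k^r}\longrightarrow\infty
\]
for every fixed $\delta>0$, forcing $i_0(\delta)<\infty$.

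For the divergence $k(\delta)\to\infty$, I would fix $N\in\mathbb{N}$ and set $C_N:=\sum_{i=1}^N(\alpha_i^{-2}+\gamma_{i-1}^2)$, which is finite and $\delta$-independent. Then $e_i^n=\delta^2 C_i\leq \delta^2 C_N$ for every $i\leq N$, whereas $e_i^r\geq 1$. Hence, as soon as $\delta^2 C_N<\tau$, one obtains $e_i^n\leq \delta^2 C_N<\tau\leq \tau e_i^r$ for all $i\leq N$, whence $k(\delta)\geq N$; since $N$ was arbitrary, $k(\delta)\to\infty$. The whole proof is elementary once the asymptotic comparison $e_k^n/e_k^r\to\infty$ is in hand---that is the only step that really needs doing, and it is itself immediate from the quadratic-in-$\gamma_k$ lower bound on $e_k^n$ and the sublinear-in-$\gamma_k$ upper bound on $e_k^r$ already at our disposal.
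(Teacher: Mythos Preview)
Your proof is correct, and the overall structure matches the paper's, but two of the substeps are handled differently. For the existence of a violation index (i.e.\ $i_0(\delta)<\infty$), the paper argues by contradiction: assuming $e_k^n\le\tau e_k^r$ for all $k$, it divides by $\gamma_k$ and uses the bound $\alpha_j\le C$ together with the rescaling $\beta_j=\alpha_j/C$ to show $\gamma_k^{-1}\sum_{i\le k}\alpha_i^{-2}\ge C^{-1}$, contradicting $\gamma_k^{-1}e_k^r\to 0$. Your route is more direct: the AM--GM inequality applied only to the last summand already gives $e_k^n\ge\tfrac{\delta^2}{2}\gamma_k^2$, which is a strictly stronger (quadratic in $\gamma_k$) lower bound than the paper's linear one, and immediately yields $e_k^n/e_k^r\to\infty$ without any rescaling trick or explicit appeal to the uniform bound on $\alpha_k$. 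For the divergence $k(\delta)\to\infty$, the paper again argues by contradiction, while you give the cleaner direct argument via the finite partial sums $C_N$. Both approaches rest on the same two background facts ($\gamma_k\to\infty$ and $\gamma_k^{-1}e_k^r\to 0$), so nothing is lost; your version is simply a little more streamlined.
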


\begin{proof}
For the case $e_1^n > \tau e_1^r$, there is nothing to show. Now assume that $e_1^n \leq \tau e_1^r$ holds. We now show that there exists a $\bar k \in \N$ such that $e_{\bar k}^n > \tau e_{\bar k}^r$. Assume that such a value does not exists, hence we get $e_k^n \leq \tau e_k^r$ for all $k \in \N$. Multiplying this inequality with $\gamma_k^{-1}$ yields for $k \to \infty$ (see \cite[Lemma 3.5]{wachsmuth2016})
$$ \delta^2 \gamma_{k}^{-1} \sum\limits_{i=1}^{k} \left( \frac{1}{\alpha_i^2} + \gamma_{i-1}  \right) \leq \tau  \gamma_{k}^{-1} \left(1+   \sum\limits_{i=1}^{k} \left( \alpha_i^{-1}\gamma_i^{-\kappa}  \right) \right) \to 0.$$
Hence the sequence $(\gamma_k^{-1} e_k^n)_k$ tends to zero. Recall that there exists a constant $C > 0$ such that $\alpha_j \leq C$. Define the following quantities
$$\beta_j := C^{-1} \alpha_j, \quad \bar \gamma_k := \sum\limits_{i=1}^k \frac{1}{\beta_i},$$
leading to $0 < \beta_j \leq 1$. Now compute
\begin{align*}
\gamma_k^{-1} \sum\limits_{i=1}^k \frac{1}{\alpha_i^2} &= C^{-1} \left[ \left( C \gamma_k \right)^{-1}  \left( C^2 \sum\limits_{i=1}^k \frac{1}{\alpha_i^2}  \right) \right]\\
&= C^{-1} \bar \gamma_k^{-1} \sum\limits_{i=1}^k \frac{1}{\beta_i^2}\\
& \geq C^{-1} \bar \gamma_k^{-1} \sum\limits_{i=1}^k \frac{1}{\beta_i}\\
&= C^{-1}.
\end{align*}
We now have a contradiction since
\begin{align*}
0 < \delta^2 C^{-1} &\leq \limsup\limits_{k \to \infty} \delta^2 \gamma_{k}^{-1} \sum\limits_{i=1}^{k} \frac{1}{\alpha_i^2} \leq \limsup\limits_{k \to \infty} \delta^2 \gamma_{k}^{-1} \sum\limits_{i=1}^{k} \left( \frac{1}{\alpha_i^2} + \gamma_{i-1}^2  \right)\\
&\leq \tau \lim\limits_{k \to \infty} \gamma_{k}^{-1} \left(1+   \sum\limits_{i=1}^{k}( \alpha_i^{-1}\gamma_i^{-\kappa}  ) \right)\\
&= 0.
\end{align*}
Therefore, we know the existence of $\bar k$ with $e_{\bar k}^n > \tau e_{\bar k}^r$, and we can deduce the existence of a maximal $k^\ast < \bar k$ with $e_{i}^n \leq \tau e_{i}^r \; \forall i \leq k^\ast$. Setting $k(\delta) := k^\ast$ yields the well-posedness of $k(\delta)$.\\
To show the second part we assume that this is wrong, hence there exists a $\bar k \in \N$ such that for all $\delta > 0$ we have $k(\delta) < \bar k$. Without loss of generality we assume $k(\delta) \geq 1$ and $k(\delta) +1= \bar k $. By definition of $k(\delta)$ we now obtain
$$e_{\bar k}^n  =   \delta^2 \sum\limits_{i=1}^{\bar k}\left( \frac{1}{\alpha_i^2} + \gamma_{i-1}^2  \right) > \tau \left( 1 + \sum\limits_{i=1}^{\bar k} \alpha_i^{-1} \gamma_i^{-\kappa} \right) = \tau e_{\bar k}^r, \quad \forall \delta > 0.$$
This gives a contradiction for $\delta$ small enough.
\end{proof}

If we chose $k(\delta)$ based on the principle above, we can establish the following convergence result for $u_{k(\delta)}^\delta$ as $\delta \to 0$.
\begin{theorem}\label{thm:asymptotic_convergence}
Let $k(\delta)$ be given by the a priori stopping rule presented above. Then
$$\min\limits_{j=1,...,k(\delta)}\|u^\dagger - u_j^\delta\| \to 0$$
as $\delta \to 0$.
\end{theorem}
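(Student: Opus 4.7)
The plan is to bound the minimum by a weighted average, split the error into regularization plus noise contributions, invoke the stopping rule to control the noise sum by the regularization sum, and finally use the asymptotic decay already stated after Theorem \ref{thm:SC_strong_conv} together with $k(\delta)\to\infty$.

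Concretely, first I would apply the triangle inequality pointwise,
\[
\|u^\dagger - u_j^\delta\|^2 \leq 2\|u^\dagger - u_j\|^2 + 2\|u_j - u_j^\delta\|^2,
\]
then weight by $1/\alpha_j$ and sum for $j=1,\dots,k(\delta)$. Using Theorem \ref{thm:SC_strong_conv} on the first term and Lemma \ref{lem:noise_error} on the second, this yields
\[
\sum_{j=1}^{k(\delta)} \frac{1}{\alpha_j}\|u^\dagger - u_j^\delta\|^2 \leq 2c\, e_{k(\delta)}^r + 2 e_{k(\delta)}^n.
\]
By the very definition of the stopping rule we have $e_{k(\delta)}^n \leq \tau\, e_{k(\delta)}^r$, hence
\[
\sum_{j=1}^{k(\delta)} \frac{1}{\alpha_j}\|u^\dagger - u_j^\delta\|^2 \leq 2(c+\tau)\, e_{k(\delta)}^r.
\]

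Next I would pull the minimum out of the weighted sum: since $\gamma_{k(\delta)} = \sum_{j=1}^{k(\delta)} 1/\alpha_j$,
\[
\gamma_{k(\delta)}\, \min_{j=1,\dots,k(\delta)} \|u^\dagger - u_j^\delta\|^2 \leq \sum_{j=1}^{k(\delta)} \frac{1}{\alpha_j}\|u^\dagger - u_j^\delta\|^2 \leq 2(c+\tau)\, e_{k(\delta)}^r,
\]
so that
\[
\min_{j=1,\dots,k(\delta)} \|u^\dagger - u_j^\delta\|^2 \leq 2(c+\tau)\, \gamma_{k(\delta)}^{-1} e_{k(\delta)}^r.
\]

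Finally I would pass to the limit $\delta\to 0$. Since $k(\delta)\to\infty$ by the preceding lemma, it suffices to show $\gamma_k^{-1} e_k^r \to 0$ as $k\to\infty$. By definition $e_k^r = 1 + \sum_{i=1}^k \alpha_i^{-1}\gamma_i^{-\kappa}$, so the claim splits into $\gamma_k^{-1}\to 0$ and $\gamma_k^{-1}\sum_{i=1}^k \alpha_i^{-1}\gamma_i^{-\kappa}\to 0$, both of which are recorded in the remark after Theorem \ref{thm:SC_strong_conv}. Combining these yields $\min_{j\leq k(\delta)}\|u^\dagger - u_j^\delta\|\to 0$.

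The only mildly delicate step is the interplay with $k(\delta)$: one must make sure the estimate is summed up to exactly the stopping index so that the bound $e_i^n \leq \tau e_i^r$ can be applied to the telescoped sum, and then that $k(\delta)\to\infty$ is used in the right direction (the decay of $\gamma_k^{-1}e_k^r$ is asymptotic in $k$, so monotone escape of $k(\delta)$ to infinity is exactly what transfers this to an asymptotic statement in $\delta$). No new estimate is needed; the theorem is essentially a bookkeeping consequence of the two error bounds and the balancing built into the stopping rule.
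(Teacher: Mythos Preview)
Your proof is correct and follows essentially the same approach as the paper: triangle inequality, sum with weights $1/\alpha_j$, bound the noise sum by $\tau e_{k(\delta)}^r$ via the stopping rule, extract the minimum using $\gamma_{k(\delta)}$, and conclude from $k(\delta)\to\infty$ together with $\gamma_k^{-1}e_k^r\to 0$. The only cosmetic difference is in the tracking of constants (you write $2(c+\tau)$ where the paper absorbs the $2$ into a generic $c$ and writes $c(1+\tau)$).
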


\begin{proof}
We use triangle inequality to obtain
\begin{align*}
\sum\limits_{i=1}^{k(\delta)} \frac{1}{\alpha_i} \|u^\dagger - u_{i}^\delta\|^2 &\leq \sum\limits_{i=1}^{k(\delta)} \frac{1}{\alpha_i} \left( \|u^\dagger - u_{i}\| + \|u_{i} - u_{i}^\delta\|  \right)^2\\
&\leq c \left( \sum\limits_{i=1}^{k(\delta)} \frac{1}{\alpha_i} \|u^\dagger - u_{i}\|^2 + \sum\limits_{i=1}^{k(\delta)} \frac{1}{\alpha_i}  \|u_i - u_i^\delta\|^2  \right)\\
&\leq c\left( e_{k(\delta)}^r + e_{k(\delta)}^n\right)\\
&\leq c(1+\tau )e_{k(\delta)}^r,
\end{align*}
which yields
$$\min\limits_{i=1,...,k(\delta)} \|u^\dagger - u_i^\delta\|^2 \leq c(1+\tau) \gamma_{k(\delta)}^{-1} e_{k(\delta)}^r$$
and since $k(\delta) \to \infty$ as $\delta \to 0$ we obtain the result (see \cite[Lemma 3.5]{wachsmuth2016}).
\end{proof}

The results can be improved if we assume that Assumption \ref{ass:SC} is satisfied. In this case we set $e_k^r := 1$, see theorem \ref{thm:SC_strong_conv}. Note that all of the results above stay true in this case. The modification of the proofs is simple.

\section{Numerical implementation}
\label{sec:numerics}
This section is devoted to the numerical implementation of the Bregman iteration using finite elements.

\subsection{Semi-Smooth Newton Method for the subproblem}
In our algorithm we need to solve the subproblem

\begin{equation}\label{eq:sub}\tag{SUB}
\left. \begin{split}
    \text{Minimize} &\quad  \frac{1}{2}\|Su-z\|_Y^2 + \alpha \left[ \frac{1}{2}\|u\|^2 -(\lambda,u) \right]  \\
    \text{such that} &\quad u \in \Uad
\end{split} \quad \right\}
\end{equation}
with $\lambda \in L^2(\Omega)$, which has a unique solution, characterized by the projection formula
\begin{equation}\label{eq:fixed}
u = P_\Uad \left( -\frac{1}{\alpha} p(u)+ \lambda \right),
\end{equation}
with $p(u) = S^\ast(Su-z) $. Several different techniques are available to solve \eqref{eq:sub}. The simplest is a projected gradient (see \cite{troelsch2010}) method with the decent direction $-S^\ast(Su -z) - \alpha (u - \lambda)$. The implementation is rather simple but comes at very slow convergence speed and high numerical costs. Nevertheless the gradient method can be used to globalize the Newton method presented below. 

In order to solve \eqref{eq:sub} we want to apply a Newton method to \eqref{eq:fixed}. In this section we follow the idea presented in \cite{hinze2009b}, where a semi-smooth Newton solver was applied for a Neumann-type elliptic optimal control problem. We adapt this technique for distributed control problems. This technique can also be applied for optimal boundary control problems, see \cite{beuchler2012}. Denote by $u^k$ the iterates given by the Newton method. Define the function
$$F(u) := u - P_\Uad \left(- \frac{1}{\alpha}p(u) + \lambda \right)$$
and apply a Newton step
$$0 = F(u^k) + G(u^k)(u^{k+1} - u^k),$$
where $G(u^k): L^2(\Omega) \to L^2(\Omega)$ is the Newton derivative of $F$ at $u^k$. For a convergence analysis of this Newton method we refer to \cite{hinze2009b}. A suitable function $G$ is given by the following lemma. The result can also be found in \cite{beuchler2012} or in \cite[Theorem 2.14]{hinze2009}.

\begin{lemma}\label{lemma:Newton_derivation}
A suitable function $G$ is given by
$$G(u^k)(u^{k+1} - u^k) = (u^{k+1} - u^k) + d \cdot \left( \frac{1}{\alpha} \left( p(u^{k+1}) - p(u^k)  \right) \right)$$
and
$$d = \begin{cases} 0, &\text{if } -\frac{1}{\alpha} p(u^k) + \lambda \geq u_b\\
1, &\text{if } -\frac{1}{\alpha} p(u^k) + \lambda \in (u_a, u_b)\\
0, &\text{if } -\frac{1}{\alpha} p(u^k) + \lambda \leq u_a \end{cases}. $$
\end{lemma}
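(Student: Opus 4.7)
My plan is to obtain the Newton derivative $G$ by applying the chain rule for Newton derivatives to the decomposition
\[
F(u) = u - P_\Uad(\Phi(u)), \qquad \Phi(u) := -\tfrac{1}{\alpha} p(u) + \lambda,
\]
using that $p(u) = S^\ast S u - S^\ast z$ is affine in $u$ and therefore Fr\'echet (and thus Newton) differentiable with derivative $h \mapsto S^\ast S h$. So the only substantive ingredient is a Newton derivative for the pointwise projection $P_\Uad$.

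For this I would invoke the standard fact (see \cite[Theorem 2.14]{hinze2009}, cited in the statement) that the pointwise $\max/\min$ operator underlying $P_\Uad$ is Newton differentiable as a map $L^q(\Omega) \to L^2(\Omega)$ for any $q > 2$, and that at a point $v \in L^q(\Omega)$ one admissible Newton derivative is the multiplication operator $h \mapsto \chi_{\{u_a < v < u_b\}}\cdot h$. Applying this with $v = \Phi(u^k)$ produces exactly the function $d$ stated in the lemma. By the chain rule, a Newton derivative of $u \mapsto P_\Uad(\Phi(u))$ at $u^k$ is the linear operator $h \mapsto d \cdot \bigl(-\tfrac{1}{\alpha} S^\ast S h\bigr)$, and subtracting from the identity yields
\[
G(u^k)\, h = h + d \cdot \tfrac{1}{\alpha} S^\ast S h.
\]
Evaluating at the Newton increment $h = u^{k+1} - u^k$ and using the affineness of $p$, so that $S^\ast S(u^{k+1}-u^k) = p(u^{k+1}) - p(u^k)$, reproduces the formula claimed in the lemma.

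The principal technical obstacle, really the only one, is the so-called norm-gap requirement: the pointwise projection is \emph{not} Newton differentiable as a self-map of $L^2(\Omega)$, so for the chain rule to apply I need $\Phi$ to map $L^2(\Omega)$ continuously into some $L^q(\Omega)$ with $q > 2$. This is the usual smoothing property of the composed control-to-adjoint operator $S^\ast S$, and it is a standing hypothesis in the cited references \cite{hinze2009b,beuchler2012,hinze2009}; in the PDE setting it follows from interior/elliptic regularity of $S^\ast$. Once the norm gap is verified for the concrete operator $S$ at hand, the remainder of the argument is just a bookkeeping application of the chain rule and the stated expression for $G(u^k)$ drops out.
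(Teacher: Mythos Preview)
Your argument is correct and is precisely the standard route: Newton differentiability of the pointwise projection $P_{\Uad}$ from $L^q(\Omega)$ to $L^2(\Omega)$ for $q>2$, combined with the chain rule and the affineness of $p$, together with the norm-gap condition supplied by the smoothing of $S^\ast S$. The paper does not actually give its own proof of this lemma; it simply states the result and refers to \cite{beuchler2012} and \cite[Theorem~2.14]{hinze2009}, where exactly the reasoning you outline is carried out.
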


We see that $u^{k+1}$ satisfies the relation
$$u^{k+1} = \begin{cases} u_b, &\text{if } -\frac{1}{\alpha} p(u^k) + \lambda \geq u_b \\
u_a, &\text{if }  -\frac{1}{\alpha} p(u^k) + \lambda \leq u_a\\
-\frac{1}{\alpha} p(u^{k+1}) + \lambda, &\text{if }  -\frac{1}{\alpha} p(u^k) + \lambda \in (u_a, u_b) \end{cases}.$$

Define the sets
\begin{align*}
A_b(u) &= \left\{ x \in \Omega: \; -\frac{1}{\alpha} p(u) + \lambda \geq u_b \right\},\\
I(u) &= \left\{x \in \Omega: \; -\frac{1}{\alpha} p(u) + \lambda \in (u_a, u_b) \right\},\\
A_a(u) &= \left\{x \in \Omega : \; -\frac{1}{\alpha} p(u) + \lambda \leq u_a \right\}
\end{align*}
and the operator
$$E_M : L^2(\Omega) \to L^2(\Omega), \quad E_M(v) = \chi_M (v).$$
We have $u^{k+1} = u_a$ on $A_a(u^k)$ and $u^{k+1} = u_b$ on $A_b(u^k)$. On the set $I(u^k)$ we obtain
$$E_{I(u^k)} \left( u^{k+1}  + \frac{1}{\alpha} p(u^{k+1}) - \lambda \right) = 0.$$
This can be rewritten in a linear equation for $E_{I(u^k)} u^{k+1}$.

\begin{lemma}\label{lemma:Control_lin_sys}
The function $u_{I}^{k+1} := E_{I(u^k)} u^{k+1}$ satisfies
\begin{equation}\label{equ:Control_lin_sys}
u_{I}^{k+1} + \frac{1}{\alpha} E_I q(u_{I}^{k+1}) = - \frac{1}{\alpha} E_{I(u^k)} p \big( E_{A_a(u^k)} u_a + E_{A_b(u^k)}u_b \big) + E_{I(u^k)} \lambda
\end{equation}
with $q(u) := S^\ast S u$.
\end{lemma}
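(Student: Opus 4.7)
The plan is to start from the characterization of $u^{k+1}$ displayed just above the lemma, namely that on $I(u^k)$ the identity $u^{k+1} + \frac{1}{\alpha} p(u^{k+1}) - \lambda = 0$ holds, and to translate this pointwise relation into a functional equation for the restricted quantity $u_I^{k+1} = E_{I(u^k)} u^{k+1}$. Applying $E_{I(u^k)}$ to both sides immediately yields
\begin{equation*}
u_I^{k+1} + \tfrac{1}{\alpha} E_{I(u^k)} p(u^{k+1}) - E_{I(u^k)} \lambda = 0,
\end{equation*}
so the whole task reduces to splitting the nonlinear term $E_{I(u^k)} p(u^{k+1})$ into the contribution coming from the active sets (which is already known) and the contribution coming from the unknown $u_I^{k+1}$.

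The next step is to use the decomposition
\begin{equation*}
u^{k+1} = E_{A_a(u^k)} u_a + E_{A_b(u^k)} u_b + u_I^{k+1},
\end{equation*}
which is valid because $A_a(u^k)$, $A_b(u^k)$ and $I(u^k)$ partition $\Omega$ and $u^{k+1}$ is prescribed on the two active sets by the formula stated right before the definition of $A_b(u), I(u), A_a(u)$. Since $p$ is affine in $u$ with $p(u) = S^{\ast}Su - S^{\ast}z$, we may split the linear part: for any $v, w \in L^2(\Omega)$ one has $p(v + w) = p(v) + q(w)$ with $q = S^{\ast}S$. Applying this with $v = E_{A_a(u^k)} u_a + E_{A_b(u^k)} u_b$ and $w = u_I^{k+1}$ gives
\begin{equation*}
p(u^{k+1}) = p\bigl(E_{A_a(u^k)} u_a + E_{A_b(u^k)} u_b\bigr) + q(u_I^{k+1}).
\end{equation*}

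Substituting this into the displayed identity and rearranging produces exactly \eqref{equ:Control_lin_sys}. There is no real obstacle here; the only point that requires a moment's care is that $p$ is affine, not linear, so one cannot simply distribute $p$ over the three-term decomposition of $u^{k+1}$, and one has to package the constant $-S^{\ast}z$ together with the active-set contribution (which is why the right-hand side of \eqref{equ:Control_lin_sys} features $p$ applied to the active-set part but only $q$ applied to $u_I^{k+1}$). Beyond that, the proof is just a bookkeeping rewrite.
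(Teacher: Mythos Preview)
Your proof is correct and follows exactly the approach the paper intends: the paper does not give a detailed proof but merely states that the identity $E_{I(u^k)}\bigl(u^{k+1} + \tfrac{1}{\alpha} p(u^{k+1}) - \lambda\bigr) = 0$ ``can be rewritten in a linear equation for $E_{I(u^k)} u^{k+1}$,'' and your argument via the decomposition $u^{k+1} = E_{A_a(u^k)} u_a + E_{A_b(u^k)} u_b + u_I^{k+1}$ together with the affine splitting $p(v+w) = p(v) + q(w)$ is precisely how this rewriting is carried out.
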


Our Newton solver now solves the equation above for $u_{I}^{k+1}$, which allows us to construct our new iterate $u^{k+1}$.

\subsubsection{Algorithmic aspects and implementation}\label{sec:implementation}
We now focus on the special case where $y=Su$ is given by the elliptic equation
\begin{equation*}
\left. \begin{split}
    -\Delta y = u  &\quad \text{in } \Omega\\
    y = 0 &\quad \text{on } \partial \Omega
\end{split} \quad \right\}.
\end{equation*}
The discretized version is now given by the solution $(u_h,y_h,p_h)$ of the coupled problem
\begin{equation}\label{eq:num_coupled}
\begin{alignedat}{2}
a(y_h,v_h) &= ( u_h,v_h ), &\forall v_h \in \mathbb{V}_h\\
a(p_h,v_h) &= ( v_h,y_h-z_h ), &\forall v_h \in \mathbb{V}_h\\
u_h &= P_\Uad \left( -\frac{1}{\alpha} p_h + \lambda_h \right),
\end{alignedat}
\end{equation}
with the test function space $\mathbb{V}_h$, the bilinear form
$$a(w,v) := \int\limits_\Omega \nabla w \cdot \nabla v \; dx,$$
 and $\lambda_h \in V_h$. For a given $u_h$ there exists a unique $y_h(u_h)$ and hence a unique $p_h(u_h)$, so we reduce the coupled system \eqref{eq:num_coupled} to one equation for the optimal control $u_h$, e.g.
$$u_h = P_\Uad \left( -\frac{1}{\alpha} p_h(u_h) + \lambda_h \right).$$
Note that lemma \ref{lemma:Control_lin_sys} also holds for $u_h$. We are interested in the solution $u_{h,I}^{k+1}$ from equation \eqref{equ:Control_lin_sys}. But $u_{h,I}^{k+1}$ is not a finite element function in general, since it is the truncation of a finite element function $u_{h,I}^{k+1} = E_{I(u_h^k)} \tilde u_h^{k+1}$, which can be computed by solving
\begin{align}\label{eq:discr_uk1newton}
\begin{split}
E_{I(u_h^k)} \tilde u_h^{k+1} &+ \frac{1}{\alpha} E_{I(u_h^k)} q \left(E_{I(u_h^k)} \tilde u_h^{k+1} \right)\\
&= - \frac{1}{\alpha} E_{I(u_h^k)} p \big( E_{A_a(u_h^k)} u_a + E_{A_b(u_h^k)}u_b \big)+ E_{I(u_h^k)} \lambda_h.
\end{split}
\end{align}
In the following we denote by $\underline{u_h} \in \R^m$ the coefficient vector of a function $u_h \in V_h$, where $m$ denotes the degrees of freedom ($DOF$). By testing \eqref{eq:discr_uk1newton} with a test function we obtain the following lemma.

\begin{lemma}
The coefficient vector $\underline{\tilde u_h^{k+1}}$ satisfies
\begin{equation}\label{equ:Control_Matrix_Sys}
\big(M_I + \frac{1}{\alpha} M_I K^{-1} M K^{-1} M_I \big) \underline{\tilde u_h^{k+1}} = - \frac{1}{\alpha} M_I K^{-1}M \big( K^{-1}g - \underline{z} \big) + M_I \underline{\lambda_h},
\end{equation}
where
\begin{align*}
K &= \left[ \int_{\Omega} \nabla \phi_i \cdot \nabla \phi_j \right]_{ij},\\
M_I &= \left[ \int_{I(u_h^k)} \phi_i \phi_j \right]_{ij}, \quad M_{A_a} = \left[ \int_{A_a(u_h^k)} \phi_i \phi_j \right]_{ij},\\
M &= \left[ \int_\Omega \phi_i \phi_j \right]_{ij}, \quad M_{A_b} = \left[ \int_{A_b(u_h^k)} \phi_i \phi_j \right]_{ij},\\
g &= \left[ \int_{A_a(u_h^k)} u_a \phi_j + \int_{A_b(u_h^k)} u_b \phi_j \right]_j = M_{A_a} \underline{u_a} + M_{A_b} \underline{u_b}.
\end{align*}

\end{lemma}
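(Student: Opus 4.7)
The plan is to test the operator equation \eqref{eq:discr_uk1newton} against each finite element basis function $\phi_j$ and translate every ingredient into its matrix/vector representation on $\R^m$. Two building blocks do all the work. First, for any $v_h \in V_h$ with coefficient vector $\underline{v_h}$, one has $\int_I v_h \phi_j = (M_I \underline{v_h})_j$, so that applying $E_{I(u_h^k)}$ and then testing against $\phi_j$ simply prepends the factor $M_I$. Second, the discretized state and adjoint equations in \eqref{eq:num_coupled} yield the matrix identities $\underline{y_h} = K^{-1} M \underline{u_h}$ and $\underline{p_h} = K^{-1} M(\underline{y_h} - \underline{z})$, so that the discrete versions of the operators $q$ and $p$ read off as compositions of $K^{-1}$ and $M$ acting on the appropriate load vector.

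For the left-hand side I would first test the term $E_{I(u_h^k)} \tilde u_h^{k+1}$ against $\phi_j$, yielding the entry $(M_I \underline{\tilde u_h^{k+1}})_j$. For the second term $\frac{1}{\alpha} E_{I(u_h^k)} q(E_{I(u_h^k)} \tilde u_h^{k+1})$, the truncated argument has load vector $M_I \underline{\tilde u_h^{k+1}}$ against the basis; solving the discrete state equation returns the coefficient vector $K^{-1} M_I \underline{\tilde u_h^{k+1}}$, the discrete adjoint equation then multiplies by $K^{-1} M$, and finally restricting to $I$ and testing against $\phi_j$ prepends another $M_I$. Summing these assembles the matrix $M_I + \frac{1}{\alpha} M_I K^{-1} M K^{-1} M_I$ acting on $\underline{\tilde u_h^{k+1}}$.

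For the right-hand side I would proceed analogously with the function $E_{A_a(u_h^k)} u_a + E_{A_b(u_h^k)} u_b$: its load vector against the basis is precisely $g = M_{A_a} \underline{u_a} + M_{A_b} \underline{u_b}$ by the very definition of $g$. Solving the discrete state equation with this load vector gives the coefficient vector $K^{-1} g$, the discrete adjoint equation with desired state $\underline{z}$ then gives $K^{-1} M (K^{-1} g - \underline{z})$, and finally the factor $E_I$ and testing against $\phi_j$ contribute another $M_I$, producing $-\frac{1}{\alpha} M_I K^{-1} M (K^{-1} g - \underline{z})$. The remaining term $E_{I(u_h^k)} \lambda_h$ tested against $\phi_j$ contributes $M_I \underline{\lambda_h}$, closing the identity \eqref{equ:Control_Matrix_Sys}.

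The main pitfall I would watch for is that $\tilde u_h^{k+1}$ is a genuine finite element function whereas $E_I \tilde u_h^{k+1}$ generally is not; consequently the discrete state equation driven by the restriction must be fed its $L^2$-representation against the basis, namely $M_I \underline{\tilde u_h^{k+1}}$, rather than a naive coefficient truncation. Once this distinction is kept straight, equating the tested rows for all $j$ yields the claimed matrix system.
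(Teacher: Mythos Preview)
Your proposal is correct and follows exactly the approach indicated in the paper, which simply states that the lemma follows ``by testing \eqref{eq:discr_uk1newton} with a test function.'' You have spelled out in full the routine Galerkin bookkeeping that the paper leaves implicit, and you correctly flag the one subtle point---that the truncated function $E_I\tilde u_h^{k+1}$ is not itself a finite element function, so its contribution to the discrete state equation enters through the load vector $M_I\underline{\tilde u_h^{k+1}}$ rather than through a coefficient truncation.
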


Note that we now have the relation
$$u_h^{k+1} = E_{A_a(u_h^k)}u_a + E_{A_b(u_h^k)}u_b + E_{I(u_h^k)} \tilde u_h^{k+1}.$$
We can use this relation to get a system for the coefficient vector of the function $p_h^{k+1}$.

\begin{lemma}
The coefficient vector of the adjoint state $p_h^{k+1}$ satisfies
$$\underline{p_h^{k+1}} = K^{-1}M \big( K^{-1}(g+M_I \underline{\tilde u_h^{k+1}})-\underline{z_h} \big).$$
\end{lemma}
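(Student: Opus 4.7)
The plan is to derive the stated identity directly from the discrete state and adjoint equations already written down in \eqref{eq:num_coupled}, combined with the structural decomposition of $u_h^{k+1}$ obtained from the Newton step.

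First I would write the state equation in matrix form. Testing $a(y_h^{k+1}, v_h) = (u_h^{k+1}, v_h)$ against the nodal basis $\{\phi_j\}$ gives $K \underline{y_h^{k+1}} = \underline{b^{k+1}}$, where $\underline{b^{k+1}}_j = \int_\Omega u_h^{k+1}\phi_j\,dx$. Using the identity
$$u_h^{k+1} = E_{A_a(u_h^k)} u_a + E_{A_b(u_h^k)} u_b + E_{I(u_h^k)} \tilde u_h^{k+1}$$
stated just above the lemma, and splitting $\int_\Omega = \int_{A_a(u_h^k)} + \int_{A_b(u_h^k)} + \int_{I(u_h^k)}$, each of the three pieces integrates against $\phi_j$ only on its respective subdomain. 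The first two pieces assemble into the vector $g = M_{A_a}\underline{u_a} + M_{A_b}\underline{u_b}$ already defined in the preceding lemma, while the third piece produces $M_I \underline{\tilde u_h^{k+1}}$ since $\tilde u_h^{k+1}$ is a genuine finite element function. Hence
$$\underline{y_h^{k+1}} = K^{-1}\bigl(g + M_I \underline{\tilde u_h^{k+1}}\bigr).$$

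Next I would treat the adjoint equation $a(p_h^{k+1}, v_h) = (v_h, y_h^{k+1} - z_h)$. Because $y_h^{k+1}$ and $z_h$ are both finite element functions, assembling in the basis gives the clean matrix relation $K \underline{p_h^{k+1}} = M\bigl(\underline{y_h^{k+1}} - \underline{z_h}\bigr)$. Substituting the expression for $\underline{y_h^{k+1}}$ obtained in the previous step and multiplying by $K^{-1}$ yields
$$\underline{p_h^{k+1}} = K^{-1} M\bigl(K^{-1}(g + M_I \underline{\tilde u_h^{k+1}}) - \underline{z_h}\bigr),$$
which is the claimed formula.

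There is no genuinely hard step here; the proof is a matrix-assembly exercise. The only point that needs a brief verbal justification is that, although $u_h^{k+1}$ is not itself a finite element function (since it involves characteristic-function truncations of $u_a, u_b$), its appearance on the right-hand side of the state equation is only through the load vector $\bigl(\int_\Omega u_h^{k+1}\phi_j\bigr)_j$, and splitting this integral over the three active/inactive sets is exactly what produces $g + M_I \underline{\tilde u_h^{k+1}}$. Everything else is just chaining $K^{-1}$ and $M$.
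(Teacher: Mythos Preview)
Your proposal is correct and follows exactly the route the paper intends: the paper states the decomposition $u_h^{k+1} = E_{A_a(u_h^k)}u_a + E_{A_b(u_h^k)}u_b + E_{I(u_h^k)} \tilde u_h^{k+1}$ immediately before the lemma and remarks that ``we can use this relation to get a system for the coefficient vector of the function $p_h^{k+1}$,'' but gives no explicit proof. Your argument---assembling the state equation to obtain $\underline{y_h^{k+1}} = K^{-1}(g + M_I\underline{\tilde u_h^{k+1}})$, then the adjoint equation to obtain $K\underline{p_h^{k+1}} = M(\underline{y_h^{k+1}} - \underline{z_h})$, and substituting---is precisely the computation the paper leaves to the reader.
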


Note that only the adjoint state $p_h^{k+1}$ is used to update the active and inactive sets, hence kinks and discontinuities will not be accumulated.

As mentioned in \cite{beuchler2012} the operator on the left-hand side of \eqref{equ:Control_lin_sys} is positive definite on $L^2(I(u^k))$, hence the matrix on the left-hand side of \eqref{equ:Control_Matrix_Sys} is positive definite on the span of all basis functions whose support has non-empty intersection with the inactive set $I(u_h^k)$. This makes the equation accessible with a conjugate gradient method.

With these results we can implement our Newton method and solve the subproblem without actually computing $u_h^k$, we only work with adjoint state and the active/inactive sets.

\subsection{Using the Newton-solver in the Bregman iteration}

The fact that we are not computing the control (which is not a FEM function) and work instead with the adjoint state (which is a FEM function) can be extended to the implementation of the Bregman iterative method. Denote $k$ the number of iterations and let $\lambda_h^k \in V_h$ be the computed subgradient. Let $p_h^{k+1} \in V_h$ be the adjoint state computed while solving the subproblem. To start the next iteration all we have to do is to update the subgradient
$$\lambda_h^{k+1} := - \frac{1}{\alpha_{k+1}} p_h^{k+1} + \lambda_h^{k} \in V_h,$$
to start the next iteration. Again note that we do not need to compute the control. The control can be computed (for plotting e.g.) using the optimality condition $u_h^{k+1} = P_\Uad(\lambda_h^{k+1})$ if needed.

\section{Numerical examples}\label{sec:numerical_results}
In the following we present some numerical examples for the problem

\begin{equation}\label{eq:test_problem}
 \begin{split}
    \text{Minimize} &\quad \frac{1}{2}\|y - z\|_Y^2  \\
    \text{such that} & \quad -\Delta y = u + e_\Omega \quad \text{in } \Omega,\\
    &\quad y=0 \quad \text{on } \partial \Omega,\\
    &\quad u_a \leq u \leq u_b \quad \text{a.e. in } \Omega,
\end{split}
\end{equation}

to illustrate our stopping rule. Our implementation is done in FEniCs (\cite{fenics}) and we use Lagrange polynomials of order 1. In one space dimensions we use an equidistant mesh, and in two space dimensions we use a regular triangulation. Here the degrees of freedom of our discretization will be denoted by $DOF$. Let $z_h \in V_h$ be given and $\underline{z_h}$ its coefficient vector. Let $n_h \in V_h$ be such that each component of $\underline{n_h}$ is a random number in the interval $[-1,1]$. For a given $\delta$ we set
$$z^\delta_h := z_h + \delta \frac{n_h}{\|n_h\|},$$
to obtain $\|z_h^\delta - z_h\| = \delta$.

\subsection{Example 1}
One can see, that with the choice of $\Omega = (-1,1)$, $u_a = 0$, $u_b=\frac{1}{10}$ and
\begin{align*}
u^\dagger(x) &= \begin{cases}
\frac{1}{10} & \text{if} \quad x \in [-1, -\frac{1}{2}]\\
0 &\text{if} \quad x \in [-\frac{1}{2}, \frac{1}{4}]\\
(x+1)(x-\frac{1}{4})(x-\frac{3}{4})(x-1) &\text{if} \quad x \in [\frac{1}{4}, \frac{3}{4}]\\
0 &\text{if} \quad x \in [\frac{3}{4}, 1]
\end{cases},\\
y^\dagger(x) &= (-1)\begin{cases}
\frac{7}{3072} + \frac{803 x}{15360} + \frac{x^2}{20} & \text{if} \quad x \in [-1, -\frac{1}{2}]\\
- \frac{157}{15360} + \frac{7x}{3072} &\text{if} \quad x \in [-\frac{1}{2}, \frac{1}{4}]\\
-\frac{581}{49152} + \frac{11x}{480} - \frac{3 x^2}{32} + \frac{x^3}{6} - \frac{13x^4}{192} - \frac{x^5}{20} + \frac{x^6}{30} &\text{if} \quad x \in [\frac{1}{4}, \frac{3}{4}]\\
-\frac{271}{15360} + \frac{271 x}{15360} &\text{if} \quad x \in [\frac{3}{4}, 1]
\end{cases}\\
p^\dagger(x) &= (-1)\begin{cases}
-(x+1)(x+ \frac{1}{2})^3(x- \frac{1}{4})^4 & \text{if} \quad x \in [-1, \frac{1}{4}]\\
0 &\text{if} \quad x \in [\frac{1}{4}, \frac{3}{4}]\\
(x-1)(x-\frac{3}{4})^4 &\text{if} \quad x \in [\frac{3}{4}, 1]
\end{cases},\\
z(x) &= y^\dagger(x) - \Delta p^\dagger(x)\\
e_\Omega(x) &= 0
\end{align*}

the functions $(u^\dagger, y^\dagger, p^\dagger)$ are a solution to \eqref{eq:test_problem}. Furthermore Assumption \ref{ass:ActiveSet} is satisfied with $I = (0,1)$ and $\kappa = \frac{1}{4}$, hence the solution is bang-bang on $A = (-1,0]$ and satisfies a source condition on $I$. We apply algorithm \ref{alg:MinEx} with constant $\alpha_k = 1$ and different noise level $\delta$ and compute $\|u^\dagger - u_k^\delta\|$. Furthermore we compute the stopping rule with $\tau = 5 \cdot 10^3$ and $DOF=10^5$. The results can be found in figure \ref{fig:case12}.

\subsection{Example 2}
With the choice of $\Omega = (-1,1)$, $u_a = -1$, $u_b = 1$ and
\begin{align*}
p^\dagger(x) &= \sin(\pi x)\\
u^\dagger(x) &= - \text{sign}(p^\dagger)\\
y^\dagger(x) &= 1-x^2\\
e_\Omega(x) &= - \Delta y^\dagger(x) - u^\dagger(x)\\
z(x) &= y^\dagger(x) + \Delta p^\dagger(x)
\end{align*}
the functions $(u^\dagger, y^\dagger, p^\dagger)$ are a solution to \eqref{eq:test_problem}. Here the solution satisfies Assumption \ref{ass:ActiveSet} with $A=\Omega$ and $\kappa = 1$. Again we apply algorithm \ref{alg:MinEx} with constant $\alpha_k = 1$ and different noise level $\delta$. The stopping rule $k(\delta)$ is computed with $\tau = 10^6$ and $DOF=10^5$. The results can be found in figure \ref{fig:case12}.

\begin{figure}[htbp]
\makebox[\textwidth]{\input{example1.tikz}}\vspace{0.5cm}
\makebox[\textwidth]{\input{example2.tikz}}
\caption{Regularization error $\|u_k^\delta-u^\dagger\|$ of example 1 and 2, with different noise estimates $\delta$ after 750 Iterations. The markers highlight the stopping points using the a priori stopping rule.}
\label{fig:case12}
\end{figure}

\subsection{Example 3}
With the choice of $\Omega = (0,1)$, $u_a = -1$, $u_b = 1$ and
\begin{align*}
p^\dagger(x) &= x(1-x)(3x-1)^3\\
u^\dagger(x) &= - \text{sign}(p^\dagger(x))\\
y^\dagger(x) &= 1-x^2\\
e_\Omega(x) &= - \Delta y^\dagger(x) - u^\dagger(x)\\
z(x) &= y^\dagger(x) + \Delta p^\dagger(x)
\end{align*}
the functions $(u^\dagger, y^\dagger, p^\dagger)$ are a solution to \eqref{eq:test_problem}. Here the solution satisfies Assumption \ref{ass:ActiveSet} with $A=\Omega$ and $\kappa = \frac{1}{3}$. Again we apply algorithm \ref{alg:MinEx} with constant $\alpha_k = 1$ and different noise level $\delta$. The stopping rule $k(\delta)$ is computed with $\tau = 5 \cdot 10^5$ and $DOF=10^5$. The results can be found in figure \ref{fig:case34}.

\subsection{Example 4}
With the choice of $\Omega = (0,1)^2$, $u_a = -1$, $u_b = 1$ and
\begin{align*}
p^\dagger(x) &= - \frac{1}{8 \pi^2} \sin(2\pi x) \sin(2\pi y)\\
u^\dagger(x) &= - \sign(p^\dagger(x))\\
y^\dagger(x) &= \sin(\pi x) \sin(\pi y)\\
e_\Omega(x) &= 2 \pi^2 \sin(\pi x) \sin(\pi y) - u^\dagger\\
z(x) &= \sin(\pi x) \sin(\pi y) + \sin(2\pi x) \sin(2\pi y)
\end{align*}
the functions $(u^\dagger, y^\dagger, p^\dagger)$ are a solution to \eqref{eq:test_problem}. Here the solution satisfies Assumption \ref{ass:ActiveSet} with $A=\Omega$. Numerical estimates indicate $\kappa = 1$. Again we apply algorithm \ref{alg:MinEx} with constant $\alpha_k = 0.1$ and different noise level $\delta$. The stopping rule $k(\delta)$ is computed with $\tau = 10^7$ and $DOF=10^6$. The results can be found in figure \ref{fig:case34}.

\begin{figure}[htbp]
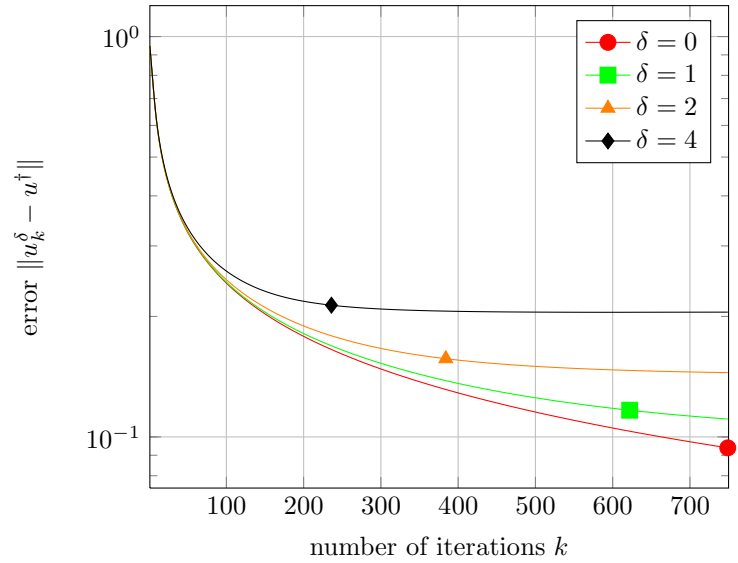

\makebox[\textwidth]{\input{example3.tikz}}\vspace{0.5cm}
\makebox[\textwidth]{\input{example4.tikz}}
\caption{Regularization error $\|u_k^\delta-u^\dagger\|$ of example 3 and 4, with different noise estimates $\delta$ after 750 Iterations. The markers highlight the stopping points using the a priori stopping rule.}
\label{fig:case34}
\end{figure}

Let us remark that such an a priori stopping rule is barely possible in practice, as the constant $\kappa$ appearing in Assumption \ref{ass:ActiveSet} is not known a priori, as it depends on the unknown solution of the unregularized problem and the possible unaccessible noiseless data. Furthermore the choice of $\tau$ is not clear a priori. Nevertheless we can use the a priori rule as an benchmark to compare the convergence order of an a posteriori stopping rule.

In theorem \ref{thm:asymptotic_convergence} we proved asymptotic convergence of our stopping rule independent from $\tau$. This can also be observed numerically. We computed $\|u_{k(\delta)}^\delta - u^\dagger\|$ for different values of $\tau$ and $\delta$ for example 3 with constant $\alpha_k=0.1$. The results can be found in figure \ref{fig:asymptotic}.

\begin{figure}[htbp]
\makebox[\textwidth]{\begin{tikzpicture}
\begin{loglogaxis}[
    title=Asymptotic convergence for Example 3,
    xlabel={error estimate $\delta$},
    ylabel style={align=center}, 
    ylabel=error $\|u_{k(\delta)}^\delta-u^\dagger\|$\\\\,
    xminorticks=true,
    xmin=0.0009765625,
    xmax=1,
    log basis x={2},
    height=8cm,
    grid=major,
    legend entries={$\tau=10^0$,$\tau=10^1$,$\tau=10^2$,$\tau=10^3$,$\tau=10^4$},
    legend pos=north west,
]

%
\addplot[color=red, mark=*, mark options={scale=1.5}] plot coordinates {
(1,0.639332383891995)
(0.5,0.6388888583111267)
(0.25,0.6386690604249581)
(0.125,0.4740306917339204)
(0.0625,0.4372910754618835)
(0.03125,0.3904109755868705)
(0.015625,0.3538590196793837)
(0.0078125,0.3211422421756865)
(0.00390625,0.2895970747875865)
(0.00195312,0.261892378248554)
(0.000976562,0.2370930143784563)
};

\addplot[color=green, mark=square*, mark options={scale=1.5}] plot coordinates {
(1,0.639332383891995)
(0.5,0.5050740474995314)
(0.25,0.4534989561546362)
(0.125,0.4053288520150629)
(0.0625,0.3692483033738199)
(0.03125,0.3326952165547493)
(0.015625,0.2999376814262183)
(0.0078125,0.2712460083446118)
(0.00390625,0.2452542259625069)
(0.00195312,0.22178904694237)
(0.000976562,0.2006931911215759)

};

\addplot[color=orange, mark=triangle*, mark options={scale=1.5}] plot coordinates {

(1,0.4758303237784214)
(0.5,0.4258065500863034)
(0.25,0.3795281049604838)
(0.125,0.3452801481247477)
(0.0625,0.3109000007738519)
(0.03125,0.2808832578047406)
(0.015625,0.2538964079636459)
(0.0078125,0.2296179077490553)
(0.00390625,0.2076637812531929)
(0.00195312,0.1878833807890732)
(0.000976562,0.1700494606683246)

};

\addplot[color=black, mark=diamond*, mark options={scale=1.5}] plot coordinates {
(1,0.3942044027697085)
(0.5,0.3561976379610592)
(0.25,0.3225802462935994)
(0.125,0.291344057138852)
(0.0625,0.2633130159053495)
(0.03125,0.2378945932606141)
(0.015625,0.2150559492158381)
(0.0078125,0.194490719248928)
(0.00390625,0.1759812336769707)
(0.00195312,0.1592634405789415)
(0.000976562,0.1441588448966071)

};

\addplot[color=black, mark=square, mark options={scale=1.5}] plot coordinates {
(1,0.3404111210638429)
(0.5,0.3056236413081528)
(0.25,0.2745467920789085)
(0.125,0.2474523367690911)
(0.0625,0.223220181166162)
(0.03125,0.2016890121446885)
(0.015625,0.1823810380160502)
(0.0078125,0.1649608128967995)
(0.00390625,0.1492628430668249)
(0.00195312,0.1350831027102612)
(0.000976562,0.1222726356628757)

};

\end{loglogaxis}
\end{tikzpicture}}
\caption{Asymptotic convergence of the stopping rule for example 3. Here we use $DOF = 10^5$ and $\alpha_k=0.1$.}
\label{fig:asymptotic}
\end{figure}
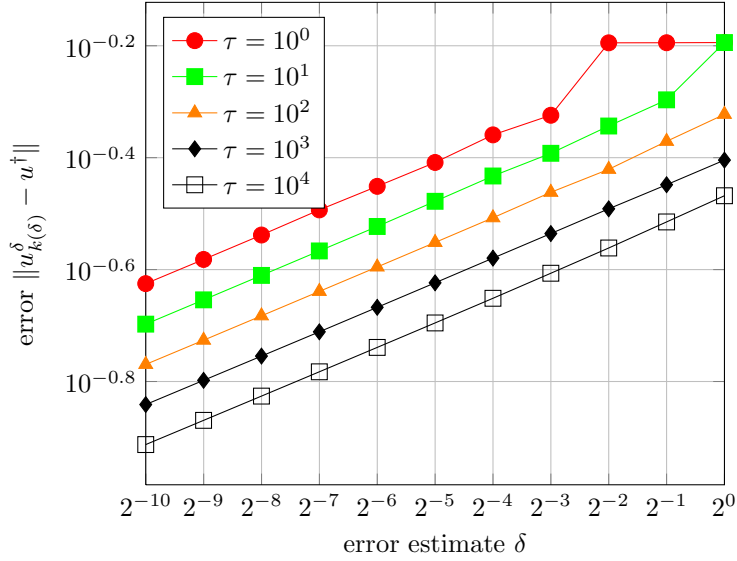

\begin{remark}
Let us remark that it is an open question to construct an a posteriori stopping rule in our case. It is not clear how to extend the a posteriori estimates presented in \cite{wachsmuth2011} into our iterative method. Furthermore we cannot apply more general a posteriori stopping rules, as presented in \cite{schock2005} as they rely on estimates of $\|u_k - u_k^\delta\|$, which are not available in our case.
\end{remark}

\newpage
\bibliographystyle{plain}
\bibliography{literatur}

\begin{thebibliography}{10}

\bibitem{beuchler2012}
Sven Beuchler, Clemens Pechstein, and Daniel Wachsmuth.
\newblock Boundary concentrated finite elements for optimal boundary control
  problems of elliptic {PDE}s.
\newblock {\em Comput. Optim. Appl.}, 51(2):883--908, 2012.

\bibitem{burger2007}
M.~Burger, E.~Resmerita, and L.~He.
\newblock Error estimation for {B}regman iterations and inverse scale space
  methods in image restoration.
\newblock {\em Computing}, 81(2-3):109--135, 2007.

\bibitem{chaventkunisch94}
G.~Chavent and K.~Kunisch.
\newblock Convergence of {T}ikhonov regularization for constrained ill-posed
  inverse problems.
\newblock {\em Inverse Problems}, 10(1):63--76, 1994.

\bibitem{engl1996}
Heinz~W. Engl, Martin Hanke, and Andreas Neubauer.
\newblock {\em Regularization of inverse problems}, volume 375 of {\em
  Mathematics and its Applications}.
\newblock Kluwer Academic Publishers Group, Dordrecht, 1996.

\bibitem{fenics}
FEniCS.
\newblock http://www.fenicsproject.org, 2016.

\bibitem{frick2012}
K.~Frick and M.~Grasmair.
\newblock Regularization of linear ill-posed problems by the augmented
  {L}agrangian method and variational inequalities.
\newblock {\em Inverse Problems}, 28(10):104005, 16, 2012.

\bibitem{frick2011}
Klaus Frick, Dirk~A. Lorenz, and Elena Resmerita.
\newblock Morozov's principle for the augmented {L}agrangian method applied to
  linear inverse problems.
\newblock {\em Multiscale Model. Simul.}, 9(4):1528--1548, 2011.

\bibitem{frick2010}
Klaus Frick and Otmar Scherzer.
\newblock Regularization of ill-posed linear equations by the non-stationary
  augmented {L}agrangian method.
\newblock {\em J. Integral Equations Appl.}, 22(2):217--257, 2010.

\bibitem{hinze2009}
M.~Hinze, R.~Pinnau, M.~Ulbrich, and S.~Ulbrich.
\newblock {\em Optimization with {PDE} constraints}, volume~23 of {\em
  Mathematical Modelling: Theory and Applications}.
\newblock Springer, New York, 2009.

\bibitem{hinze2009b}
Michael Hinze and Ulrich Matthes.
\newblock A note on variational discretization of elliptic {N}eumann boundary
  control.
\newblock {\em Control Cybernet.}, 38(3):577--591, 2009.

\bibitem{itojin11}
Kazufumi Ito and Bangti Jin.
\newblock A new approach to nonlinear constrained {T}ikhonov regularization.
\newblock {\em Inverse Problems}, 27(10):105005, 23, 2011.

\bibitem{neubauer1988}
A.~Neubauer.
\newblock Tikhonov-regularization of ill-posed linear operator equations on
  closed convex sets.
\newblock {\em J. Approx. Theory}, 53(3):304--320, 1988.

\bibitem{osher2005}
Stanley Osher, Martin Burger, Donald Goldfarb, Jinjun Xu, and Wotao Yin.
\newblock An iterative regularization method for total variation-based image
  restoration.
\newblock {\em Multiscale Model. Simul.}, 4(2):460--489 (electronic), 2005.

\bibitem{schock2005}
Sergei Pereverzev and Eberhard Schock.
\newblock On the adaptive selection of the parameter in regularization of
  ill-posed problems.
\newblock {\em SIAM J. Numer. Anal.}, 43(5):2060--2076, 2005.

\bibitem{wachsmuth2016}
F.~{P{\"o}rner} and D.~{Wachsmuth}.
\newblock {An iterative Bregman regularization method for optimal control
  problems with inequality constraints}.
\newblock {\em ArXiv e-prints, available at http://arxiv.org/abs/1603.0579},
  March 2016.

\bibitem{troelsch2010}
Fredi Tr{\"o}ltzsch.
\newblock {\em Optimal control of partial differential equations}, volume 112
  of {\em Graduate Studies in Mathematics}.
\newblock American Mathematical Society, Providence, RI, 2010.
\newblock Theory, methods and applications, Translated from the 2005 German
  original by J{\"u}rgen Sprekels.

\bibitem{wachsmuth2013}
Daniel Wachsmuth.
\newblock Adaptive regularization and discretization of bang-bang optimal
  control problems.
\newblock {\em Electron. Trans. Numer. Anal.}, 40:249--267, 2013.

\bibitem{wachsmuth2011}
Daniel Wachsmuth and Gerd Wachsmuth.
\newblock Regularization error estimates and discrepancy principle for optimal
  control problems with inequality constraints.
\newblock {\em Control Cybernet.}, 40(4):1125--1158, 2011.

\bibitem{wachsmuth2011b}
Gerd Wachsmuth and Daniel Wachsmuth.
\newblock Convergence and regularization results for optimal control problems
  with sparsity functional.
\newblock {\em ESAIM Control Optim. Calc. Var.}, 17(3):858--886, 2011.

\end{thebibliography}

\end{document}